\DeclarePairedDelimiter{\floor}{\lfloor}{\rfloor}
\DeclarePairedDelimiter{\ceil}{\lceil}{\rceil}
\newcommand{\J}{{\bf J}}
\newcommand{\Z}{{\bf Z}}
\newcommand{\R}{{\bf R}}
\newcommand{\cl}{{\rm cl}_{L^2}\,}
\theoremstyle{plain}
\newtheorem{theorem}{Theorem}
\newtheorem{lemma}[theorem]{Lemma}
\theoremstyle{definition}
\theoremstyle{remark}
\def\hypergeom#1#2#3#4#5{{}_#1 F_{#2}\left({#3\atop#4}; \ #5\right)}
\title{On Alpert Multiwavelets}
\author[J.~Geronimo]{Jeffrey~S.~Geronimo}
\thanks{JSG was supported in part by  Simons Foundation Grant.}
\address{JSG, School of Mathematics, Georgia Institute of Technology,
Atlanta, GA 30332--0160, USA}
\author[F.~Marcellan]{Francisco~Marcell\'an}
\address{FM, Departmento de Mathem\'aticas, Universidad Carlos III de Madrid,
28911, Legan\'es, Spain}
\thanks{FM was supported by grant MTM2012-36732-C03-01 from the Direcc\'ion General de Investigaci\'on, Minsiterio de Econom\'ia y Competitividad of Spain. }
\begin{document} 

\maketitle

\begin{abstract} The multiresolution analysis of Alpert is
considered. Explicit formulas for the entries in the matrix coefficients of 
the refinement equation are given in terms of hypergeometric
functions. These entries are shown to solve  generalized eigenvalue
equations as well as partial difference equations. The matrix coefficients
in the wavelet equation are also considered and conditions are given to
obtain a unique solution. 
 \smallskip 

\noindent \textbf{Keywords}: Multiwavelets, Hypergeometric functions,
Generalized eigenvalue problem.

\noindent \textbf{Mathematics Subject Classification Numbers:} 42C40, 41A15, 33C50.

\end{abstract}

\section{Introduction}
The theory of wavelets has had a broad and lasting impact on various areas of
mathematics and engineering such as numerical analysis, signal processing,
and harmonic analysis \cite{C},\cite{CW},\cite{mal},\cite{mey}. The most
well known wavelet may
be the Haar wavelet which is not continuous and one of the great achievements
in the area is Daubechies' construction of compactly supported, orthogonal 
wavelets that are at least continuous \cite{da}. The theory of one variable 
multiwavelets \cite{ala}, \cite{dgh}, \cite{dghm}, \cite{hkm}, \cite{he}  is an extension of wavelet theory 
to the case of when there are several scaling functions instead of just one. 
This extra flexibility allows the construction of
piecewise polynomial scaling functions and wavelets that are compactly
support orthogonal and at least continuous \cite{dgha}. 
The scaling function associated
with the Haar wavelet is the constant function supported on $[0,1]$ and
zero elsewhere and the linear space associated with this function is the space
of piecewise constant polynomials with integer knots. The extension of this
space to higher degree polynomials gives the space of piecewise
polynomials of degree n with integer knots and an orthogonal basis for this
space are the Legendre polynomials restricted to $[0,1]$  and their integer translates. Alpert first 
developed the multiresolution analyses associated with these spaces and 
applied them to various problem in integral equations \cite{alb} and
numerical analysis \cite{dd} and \cite{st} An important equation in 
multiresolution analysis is the refinement equation which links the scaling 
functions on one level to their scaled versions. Here we examine in more 
detail the coefficients in the refinement equation associated with the Alpert
multiresolution analysis with the intent of obtaining formulas for these
coefficients as well as recurrence relations. These lead to combinatorial
identities and orthogonality relations that seem to have been unnoticed.
In section 1 we review Alpert's   multiresolution analyses  and
make contact with the Legendre polynomials. In section 2 we derived
various representations for the entries of the matrices in the refinement
equation and discuss the orthogonality relations satisfied by these
coefficients. In section 3 we develop recurrence formulas satisfied by
these coefficients and show that they give rise to some generalized
eigenvalue problems. In section 4 we investigate the Fourier transform 
of the scaling functions which turns out to be related to Bessel
functions of half integer order. Using some identities satisfied by 
Bessel functions we arrive at other recurrences satisfied by the entries 
in the refinement matrices. Also in this section are the identities that arise 
due to polynomial reproduction. Finally in section 5 we consider the matrices 
in the wavelet equation associated with these multiresolution analyses. These
matrices must satisfy certain conditions which follow from the orthogonality
of the wavelets to the scaling functions and to the other wavelets. We present 
natural conditions in order for there to be a unique solution to these 
equations.

\section{Preliminaries}
Let
$\phi^0,\dots,\phi^r$ be compactly supported $L^2$-functions, and suppose that
$V_0 = \cl{\rm span}\{\phi^i({\cdot}-j): i = 0,1,\dots,r,\ j\in\Z\}$.  Then
$V_0$ is called a {\it finitely generated shift invariant\/ {\rm (FSI)}
space}.  Let $(V_p)_{p\in\Z}$ be given by
$V_p = \{\phi(2^p{\cdot}): \phi\in V_0\}$.  Each space $V_p$ may be
thought of as approximating $L^2$ at a different resolution depending
on the value of $p$.  The sequence $(V_p)$ is called a
{\it multiresolution analysis} \cite{da,ghm,gl} generated by {$\phi^0,\dots,\phi^r$} if
(a) the spaces are nested, $\cdots\subset V_{-1}\subset V_0\subset V_1
\subset\cdots$, and (b) the generators $\phi^0,\dots,\phi^r$ and 
their integer translates form a Riesz basis for $V_0$.  Because of (a)
and (b) above, we can write
\begin{equation}\label{nest}
V_{j+1} = V_j \oplus W_j \quad\forall j\in\Z. 
\end{equation}
The space $W_0$ is called the {\it wavelet space}, and if
$\psi^0,\dots,\psi^r$ generate a shift-invariant basis for $W_0$, then
these functions are called {\it wavelet functions}.
If, in addition, $\phi^0,\dots,\phi^r$ 
and their integer translates form an orthogonal basis for $V_0$,
then $(V_p)$ is called an {\it orthogonal MRA}.  Let $S_{-1}^n$ be the space of
polynomial splines of degree $n$ continuous except perhaps at the integers,
and set $V_0^n = S_{-1}^n \cap L^2(\R)$.  With $V_p^n$ as above these spaces form 
a multiresolution analysis. If $n=0$ the  multiresolution analysis obtained is 
associated with the Haar wavelet while for $n>0$ they were  introduced by 
Alpert \cite{ala, alb}. 
If we let 
$$
\phi_j(t)=\begin{cases} \hat p_j(2t-1),&  0\le t<1\\ 0,& \text{elsewhere},\end{cases} 
$$
where $\hat p_j(t)$ is the  Legendre polynomial \cite{sz} of degree $j$ orthonormal on $[-1,1]$  with 
positive leading coefficient i.e. $\hat p_j(t)=k_j t^j +\text{lower degree terms}$ with $k_j>0$ and
$$
\int_{-1}^1 \hat p_j(t)\hat p_k(t)dt=\delta_{k,j},
$$ 
then
\begin{equation}\label{Phi}
\Phi_n=\left[\begin{matrix}\phi_0&\cdots&\phi_n\end{matrix}\right]^T,
\end{equation}
 and its integer translates form an orthogonal basis for $V_0$. For the convenience in later computations we set 
\begin{equation}\label{PN}
P_n(t)=\left[\begin{matrix}\hat p_0(t)\\\vdots\\\hat p_n(t)\end{matrix}\right]\chi_{[0,1]}.
\end{equation}
Equation~\eqref{nest} implies the existence of the {\it refinement} equation,  
\begin{equation}\label{refin}
\Phi_n(\frac{t}{2})=C^n_{-1}\Phi_n(t)+C^n_1\Phi_n(t-1),
\end{equation}
where the $C_{i}^n,\ i=-1,1$ are $(n+1)\times(n+1)$ matrices. The orthonormality of the entries in $\Phi_n(\frac{t}{2})$ implies that
\begin{equation}\label{mc1ortho}
2I_{n+1}=C^n_{-1}{C^n_{-1}}^T+C^n_1{C^n_1}^T,
\end{equation}
where $I_n$ is the $n\times n$ identity matrix
and $A^T$ is the transpose of $A$.
In terms of the entries of $P_n$ we see, 
\begin{equation}\label{refl2}
\hat p_i(t)=\sum_{j=0}^i(C^n_{-1})_{i,j}\hat p_j(2t+1)|_{[-1,0)}+\sum_{j=0}^i(C^n_1)_{i,j}\hat p_j(2t-1)|_{[0,1]},
\end{equation}
for $-1\le t\le1$.
In order to exploit the symmetry of the Legendre polynomials we shift $t\to t+1$
so that 
\begin{align}\label{refsym}
\Phi_n(\frac{t+1}{2})=P_n(t)&=C^n_{-1}\Phi_n(t+1)+C^n_1\Phi_n(t)\nonumber\\&=C^n_{-1}P_n(2t+1)+C^n_1P_n(2t-1).
\end{align}
These polynomials have the following representation in terms of
a ${}_2 F_1$ hypergeometric function \cite{sz}(p. 80),
\begin{equation}\label{hyperone}
p_n(t) = \frac{2^nn!}{(n+1)_n}
  \,\hypergeom21{-n,\ n+1}{1}{\frac{1-t}{2}},
\end{equation}
where formally,
$$
\hypergeom{p}{q}{a_1,\ \dots\ a_p}{b_1,\ \dots\ b_q}{t}
  = \sum_{i=0}^{\infty}\frac{(a_1)_i\dots(a_p)_i}{(b_1)_i\dots(b_q)_i(1)_i}t^i
$$ 
with $(a)_0=1$ and $(a)_i = (a)(a+1)\ldots(a+i-1)$ for $i>0$.  Since one of the numerator parameters in the definition of $p_n$ is a negative integer 
the series in  equation~\eqref{hyperone} has only finitely many terms. The
relation between $\hat p_n$ and $p_n$ is given by,
\begin{equation}\label{orthonormal}
\hat p_n(t)=\frac{\sqrt{2n+1} (2n-1)!!}{\sqrt{2}n!} p_n(t).
\end{equation}

A representation that makes the symmetry of the Legendre polynomials
manifest is \cite{sz}(p. 83)
\begin{equation}\label{lsym}
p_{2n}(x)=(-1)^n\frac{(1/2)_n}{(n+1/2)_n}\hypergeom21{-n,\ n+1/2}{1/2}{x^2},
\end{equation}
and
\begin{equation}\label{lasym}
p_{2n+1}(x)=(-1)^n\frac{(3/2)_n x }{(n+3/2)_n}\hypergeom21{-n,\ n+3/2}{3/2}{x^2}.
\end{equation}
Finally we recall the well known recurrence formula satisfied by the monic Legendre polynomial,
\begin{equation}\label{lrec}
p_{n+1}(t)=tp_n(t)-\frac{n^2}{(2n+1)(2n-1)}p_{n-1}(t).
\end{equation}
\subsection{Coefficient Representations}
Since the Legendre polynomial are symmetric or antisymmetric we need need only compute $C_1$ which equation~\eqref{refsym} shows is given by
\begin{equation}\label{c1}
C^n_1=\int_{0}^1P_n(t)P_n(2t-1)^T dt,
\end{equation}
so that
\begin{equation}\label{cij}
(C^n_1)_{i,j}=\int_{0}^1 \hat p_i(t)\hat p_j(2t-1) dt.
\end{equation}
In the above equation we index the entries in $C^n_1$ beginning with $i=0,j=0$. Because of the orthogonality of the Legendre polynomials to powers of $t$ less that their degree the above integral is equal to zero for $i<j$. Summarizing we find
\begin{lemma}\label{cosym} Let $C_1^n$ and $C_{-1}^n$ be the matrix coefficients in the 
above refinement equation. Then $C^n_1$ is a lower triangular matrix with positive diagonal entries. Furthermore 
\begin{equation}\label{symmco}
(C_{-1}^n)_{i,j}=(-1)^{i+j}(C_1^n)_{i,j},\quad i,\ j\ge0,
\end{equation} 
which gives the orthogonality relations
\begin{equation}\label{o1}
0=((-1)^{i+k}+1)\sum_{j=0}^i(C^n_1)_{i,j}(C^n_1)_{k,j} \ k> i,
\end{equation}
and
\begin{equation}\label{or2}
1=\sum_{j=0}^i(C^n_1)_{i,j}(C^n_1)_{i,j}.
\end{equation}
\end{lemma}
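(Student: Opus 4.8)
The plan is to establish the four assertions in turn, relying on the integral representation \eqref{cij}, the parity of the Legendre polynomials, and the entrywise form of \eqref{mc1ortho}.

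First I would treat the lower triangularity exactly as indicated after \eqref{cij}: the shifted functions $\{\hat p_j(2t-1)\}_{j\ge0}$ are orthogonal on $[0,1]$, so $\hat p_j(2t-1)$ is orthogonal there to every polynomial of degree strictly less than $j$; since $\hat p_i$ has degree $i$, the integral in \eqref{cij} vanishes whenever $i<j$, giving $(C^n_1)_{i,j}=0$ above the diagonal. For the diagonal entries I would discard the lower-order terms of $\hat p_i(t)=k_it^i+\cdots$, which are annihilated by orthogonality against $\hat p_i(2t-1)$, so that $(C^n_1)_{i,i}$ is a positive multiple of $k_i\int_0^1 t^i\hat p_i(2t-1)\,dt$. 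Expressing $t^i$ in terms of $\hat p_i(2t-1)$ via its positive leading coefficient $2^ik_i$ and using $\int_0^1\hat p_i(2t-1)^2\,dt=\tfrac12$, this integral is manifestly positive, which settles the first sentence of the lemma.

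The heart of the argument is the symmetry relation \eqref{symmco}. I would first record the companion representation for $C^n_{-1}$: projecting the left-hand piece of \eqref{refl2}, valid on $[-1,0]$, onto $\hat p_j(2t+1)$ and invoking the orthogonality of these functions on $[-1,0]$ expresses $(C^n_{-1})_{i,j}$ as the very same positive multiple of $\int_{-1}^0\hat p_i(t)\hat p_j(2t+1)\,dt$ that relates $(C^n_1)_{i,j}$ to its integral in \eqref{cij}. The substitution $t\mapsto -t$ carries this integral to $[0,1]$, and the parity identity $\hat p_m(-x)=(-1)^m\hat p_m(x)$ contributes the factor $(-1)^i$ from $\hat p_i(-t)$ and $(-1)^j$ from $\hat p_j\bigl(-(2t-1)\bigr)$, yielding $(C^n_{-1})_{i,j}=(-1)^{i+j}(C^n_1)_{i,j}$.

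Finally, the orthogonality relations \eqref{o1} and \eqref{or2} drop out by substituting \eqref{symmco} into the $(i,k)$ entry of \eqref{mc1ortho}. Since $(-1)^{(i+j)+(k+j)}=(-1)^{i+k}$, the two sums coalesce into $((-1)^{i+k}+1)\sum_j(C^n_1)_{i,j}(C^n_1)_{k,j}$, with the sum running only to $j=i$ because $C^n_1$ is lower triangular; taking $k>i$ gives \eqref{o1}, and $k=i$ gives $2=2\sum_j(C^n_1)_{i,j}^2$, i.e. \eqref{or2}. The main obstacle is the bookkeeping in the symmetry step: one must check that the proportionality constant linking $(C^n_{-1})_{i,j}$ to its integral coincides with the one for $(C^n_1)_{i,j}$, so that it cancels, and then correctly combine the sign from the change of variables with the sign from the parity of the Legendre polynomials. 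Everything else is a direct consequence of orthogonality and of \eqref{mc1ortho}.
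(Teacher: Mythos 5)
Your proposal is correct and follows essentially the same route the paper takes (orthogonality of $\hat p_j(2t-1)$ to lower-degree polynomials for the triangularity, the parity $\hat p_m(-x)=(-1)^m\hat p_m(x)$ under $t\mapsto -t$ for \eqref{symmco}, and substitution into the entries of \eqref{mc1ortho} for \eqref{o1}--\eqref{or2}); the paper merely asserts these steps, whereas you supply the details, including the leading-coefficient argument for positivity of the diagonal. The only caveat is a harmless normalization issue inherited from \eqref{cij} (which as written omits a factor of $2$, as the explicit value $(C^n_1)_{i,i}=2^{-i}$ in \eqref{cnm1} shows), but this does not affect any claim of the lemma.
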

We examine the above integral using monic polynomials $p_i$ which in terms of hypergeometric functions is
\begin{align}\label{I1}
I^1_{i,j}&=\int_{0}^1 p_i(t) p_j(2t-1)dt\\&=
\frac{2^{i+j}(1)_i(1)_j}{(i+1)_i(j+1)_j}I^2_{i,j},
\end{align}
where
\begin{equation*}
 I^2_{i,j}=(-1)^j\int_0^1\hypergeom21{-i,\ i+1}{1}{\frac{1-t}{2}}\hypergeom21{-j,\ j+1}{1}{t}dt.
\end{equation*}
The symmetry of the Legendre polynomials has been used to obtain the last expression. From the definition of the hypergeometric functions we find after integration,
\begin{equation*}
 I^2_{i,j}=
(-1)^j\sum_{k=0}^i\sum_{n=0}^j\frac{(-i)_k(i+1)_k}{(1)_k(1)_k 2^k}\frac{(-j)_n(j+1)_n}{(1)_n(1)_n} \frac{k! n!}{(n+k+1)!}.
\end{equation*}
Since $(n+k+1)!=(k+2)_n(1)_{k+1}$ the sum on $n$ equals
$\hypergeom21{-j,\ j+1}{k+2}{1}=\frac{(k-j+1)_j}{(k+2)_j}$ by the
Chu-Vandermonde formula \cite{ba}(p. 3) so,
$$
I^2_{i,j}=(-1)^j\sum_{k=j}^i\frac{(-i)_k(i+1)_k}{(1)_k(1)_{k+1} 2^k}\frac{(k-j+1)_j}{(k+2)_j}.
$$
where the fact that $(k-j+1)_j=0$ for $k<j$ has been used to obtain the
equality. Shifting $k$ by $k-j$, then using the identities
$(a+j)_k=(a)_k(a+j)_k$ with $a= -i, i+1$,
$
(k+1)_j=\frac{(1)_j(j+1)_k}{(1)_k}, \ \text{and} \ (k+j+2)_j=\frac{(1)_{2j+1}(2j+2)_k}{(1)_{j+1}(j+2)_k},
$
yields
\begin{align*}
I^2_{i,j}&=(-1)^j\frac{(-i)_j(i+1)_j}{(1)_{2j+1}2^j}\sum_{k=0}^{-i+j}\frac{(-i+j)_k(i+j+1)_k}{(1)_k(2j+2)_k 2^k}\\&=(-1)^j\frac{(-i)_j(i+1)_j}{(1)_{2j+1}2^j}\hypergeom21{-i+j,\ i+j+1}{2j+2}{\frac{1}{2}}.
\end{align*}
Substituting this into equation~\eqref{I1} yields
\begin{equation}\label{I1hyp}
I^1_{i,j}=\frac{2^i(1)_i(i+1)_j(1)_j}{(1)_{i-j}(i+1)_i(j+1)_j(1)_{2j+1}}\hypergeom21{-i+j,\ i+j+1}{2j+2}{\frac{1}{2}},
\end{equation}
where we have used the identity $(-1)^j(-i)_j=(1)_i/(1)_{i-j}$. This shows that
\begin{align}\label{c1ij}
(C^n_1)_{i,j}&=\frac{(2i-1)!!(2j-1)!!\sqrt{(2i+1)(2j+1)}}{(1)_j(1)_i}I^1_{i,j}\nonumber\\&=l_{i,j}\hypergeom21{-i+j,\ i+j+1}{2j+2}{\frac{1}{2}},
\end{align}
where
\begin{equation}\label{prefact}
l_{i,j}=\sqrt{\frac{2i+1}{2j+1}}\frac{(i+j)!}{2^j(2j)!(i-j)!}.
\end{equation}

When the parity of $i$ and $k$ are the same the sum in equation~\eqref{o1}
must be equal to zero and it is  easy to check that the sum in
\eqref{o1} is not in general equal to zero when $i$ and $k$ are of
different parities. If we set
$n=i-j$ in the hypergeometric function above the function becomes
\begin{equation}\label{jac}
2^n\frac{(2j+2)_n}{(n+2j+1)_n}\hypergeom21{-n,\ n+2j+1}{2j+2}{\frac{1}{2}}=p^{(2j+1,-1)}_n(0),
\end{equation}
where $p^{(\alpha,\beta)}_n(x)$ is the monic Jacobi polynomial. Since $\beta =-1,\ p^{(2j+1,-1)}_n(x)$ is not in the standard class of Jacobi orthogonal polynomials, 
furthermore in the discrete orthogonality above both the degree and the order
are changing. The representation given in equation~\eqref{c1ij} suggests an easy recurrence formula in $i$ but not so simple in $j$. A useful representation for the above  hypergeometric function that simplifies the dependence on $j$ maybe obtained by using the transformation $\hypergeom21{-n,\ b}{c}{x}=\frac{(b)_n}{(c)_n}(-x)^n \hypergeom21{-n,\ -c-n+1}{-b-n+1}{\frac{1}{x}}$ which yields,
\begin{align}\label{rep2}
\hat l_{i,j} \hypergeom21{-(i-j),\ i+j+1}{2j+2}{\frac{1}{2}}&=\hypergeom21{-(i-j),\ -i-j-1 }{-2i}{2}\nonumber\\&=\hypergeom21{-n,\ -2i+n-1}{-2i}{2},
\end{align} 
where $\hat l_{i,j}=(-2)^{i-j}\frac{(i+j+1)!(i+j)!}{(2j+2)!(2i)!}$ and $n=i-j$. The last equality shows that the hypergeometric function is 
related to Krawtchouk polynomials \cite{aar}(p. 347).

The orthogonality relation \eqref{o1} is nontrivial only among the even and
odd rows of $C_1^n$. To take this into account we use the expressions~\eqref{lsym} and \eqref{lasym}. Furthermore in order to make
apparent the polynomial character in $j$ of the resulting hypergeometric
function we use the transformation leading to equation~\eqref{rep2}. 
In this case
\begin{align}\label{ieven}
 I^1_{2i,j}&=(-1)^{i+j}2^j\frac{(1)_i(1)_j}{(-i)_i(j+1)_j}\int_0^1
 t^{2i}\hypergeom21{-i,\ -i+1/2}{-2i+1/2}{1/t^2}\hypergeom21{-j,\
   j+1}{1}{t}\nonumber\\&=(-1)^{i+j}2^j\frac{(1)_i(1)_j}{(-i)_i(j+1)_j}\sum_{k=0}^i\frac{(-i)_k(-i+1/2)_k}{(1)_k(-2i+1/2)_k}S^e_{j,n},
\end{align}
where
\begin{align*}
S^e_{j,j}&=\sum_{n=0}^j\frac{(-j)_n (j+1)_n}{(1)_n(1)_n}\int_0^1 t^{2(i-k)+n}dt\\&=\frac{1}{2(i-k)+1}\sum_{n=0}^j\frac{(-j)_n (j+1)_n(2(i-k)+1)_n}{(1)_n(1)_n(2(i-k)+2)_n}.
\end{align*}
The last sum is $\hypergeom32{-j,\ j+1,\ 2(i-k)+1}{1,\
  2(i-k)+2}{1}=\frac{(-j)_j (2(i-k)-j+1)_j}{(1)_j(2(i-k)+2)_j}$ where the
Pfaff-Saalschutz formula \cite{ba}(p.9) has been used since the hypergeometric function is balanced (i.e the sum of the numerator parameter is one less than the sum of the denominator parameters). Substitution of the above result in equation~\eqref{ieven} yields
\begin{equation}\label{eveni}
I^1_{2i,j}=(-1)^{i+j}2^j\frac{(1)_i(-j)_j}{(-i)_i(j+1)_j}\hat S_{2i,j},
\end{equation}
where
\begin{equation}\label{iieven}
\hat S_{2i,j}= \sum_{k=0}^i\frac{(-i)_k(-i+1/2)_k (2(i-k)-j+1)_j}{(1)_k(-2i+1/2)_k (2(i-k)+1)_{j+1}}.
\end{equation}
Now it is most convenient to consider $j$ even or odd. For $j\to 2j$ the above sum is equal to zero for $i-j<k$. Thus
$$
\hat S_{2i,2j}=\sum_{k=0}^{i-j}\frac{(-i)_k (-i+1/2)_k(2(i-k)-2j+1)_{2j}}{(1)_k(-2i+1/2)_k(2(i-k)+1)_{2j+1}}.
$$
For $m=0,\ 1$ we have the equations,
\begin{align*}
(2(i-k-j)+1+m)_{2j-m}&=2^{2j-m}\frac{(i-k-j+\frac{m+1}{2})(i-k-j+1+\frac{m}{2})\cdots(i-\frac{1}{2})(i)}{(-i+\frac{1}{2})_k(-i)_k}\\&=(-1)^m2^{2j-m}\frac{((-i+j+\frac{-m+1}{2})_k(-i+j-\frac{m}{2})_k (-i)_j(-i+\frac{1}{2})_{j-m}}{(-i+1/2)_k(-i)_k},
\end{align*}
and
$
(2(i-k)+1)_{2j+1-m}=2^{2j+1-m}\frac{(-i+1/2)_k(i+1/2)_{j+1-m}(-i)_k(i+1)_j}{(-i-j+m-1/2)_k(-i-j)_k}.
$
Thus with $m=0$,
\begin{align*}
\hat S_{2i,2j}&=\frac{1}{2}\frac{(-i)_j(-i+1/2)_j}{(i+1/2)_{j+1}(i+1)_j}\sum_{k=0}^{i-j}\frac{(-i+j)_k(-i+j+1/2)_k(-i-j-1/2)_k(-i-j)_k}{(1)_k(-2i+1/2)_k(-i)_k(-i+1/2)_k}\nonumber\\&=\frac{1}{2}\frac{(-i)_j(-i+1/2)_j}{(i+1/2)_{j+1}(i+1)_j}\hypergeom43{-i+j,\ -i+j+1/2,\ -i-j-1/2,\ -i-j}{-2i+1/2,\ -i,\ -i+1/2}{1}.
\end{align*}
Substitution of this into equation~\eqref{iieven} yields
\begin{align}\label{I2i2j}
 I^1_{2i,2j}&=2^{2j-1}\frac{(-i)_j(-i+1/2)_j(2j)!}{(i+1/2)_{j+1}(i+1)_j(2j+1)_{2j}}\nonumber\\&\hypergeom43{-(i-j),\
   -i+j+1/2,\ -i-j-1/2,\ -i-j}{-2i+1/2,\ -i,\ -i+1/2}{1}.
\end{align}
With $j\to2j-1$ in equation~\eqref{eveni} and $m=1$ in the above identities we obtain,
\begin{align}\label{I2i2jm1}
 I^1_{2i,2j-1}&=2^{2j-2}\frac{(-i)_j(-i+1/2)_{j-1}(2j-1)!}{(i+1/2)_j(2j)_{2j-1}(i+1)_j}\nonumber\\&\hypergeom43{-(i-j),\ -i+j-1/2,\ -i-j+1/2,\ -i-j}{-2i+1/2,\ -i,\ -i+1/2}{1}.
\end{align}
Similar manipulations for $i$ odd lead to,
\begin{align}\label{I2ip12j}
 I^1_{2i+1,2j}&=2^{2j-1}\frac{(-i)_j(-i-1/2)_j(2j)!}{(i+3/2)_j(i+1)_{j+1}(2j+1)_{2j}}\nonumber\\&\hypergeom43{-(i-j),\ -i+j-1/2,\ -i-j-1,\ -i-j-1/2}{-2i-1/2,\ -i,\ -i-1/2}{1},
\end{align}
and,
\begin{align}\label{I2ip12jp1}
 I^1_{2i+1,2j+1}&=2^{2j}\frac{(-i)_j(-i-1/2)_{j+1}(2j+1)!}{(i+3/2)_{j+1}(i+1)_{j+1}(2j+2)_{2j+1}}\nonumber\\&\hypergeom43{-(i-j),\ -i+j+1/2,\ -i-j-1,\ -i-j-3/2}{-2i-1/2,\ -i,\ -i-1/2}{1}.
\end{align}
Collecting the above computations gives,
\begin{theorem}\label{repres}
The entries in the matrix $C^n_1$ have the following representations,
\begin{align}\label{rep1}
(C^n_1)_{i,j}&=\frac{\sqrt{(2i+1)(2j+1)}(i+j)!}{2^j(2j+1)!(i-j)!}\hypergeom21{-i+j,\ i+j+1}{2j+2}{\frac{1}{2}}\nonumber\\&=(-1)^{i-j}\frac{\sqrt{(2i+1)(2j+1)}(2i)!}{2^i(i+j+1)!(i-j)!}\hypergeom21{-i+j,\ -i-j-1}{-2i}{2}.
\end{align} 
Alternatively
\begin{align}\label{rep2e}
&(C^n_1)_{2i,j}=W_{2i,j}\nonumber\\&\hypergeom43{-i+\ceil{\frac{j}{2}},\ -i+\floor{\frac{j}{2}}+1/2,\ -i-\ceil{\frac{j}{2}}-1,\ -i-\floor{\frac{j}{2}}-1/2}{-2i-1/2,\ -i,\ -i-1/2}{1}
\end{align}
and
\begin{align}\label{rep2o}
&(C^n_1)_{2i+1,j}=W_{2i+1,j}\nonumber\\&\hypergeom43{-i+\floor{\frac{j}{2}},\ -i+\ceil{\frac{j}{2}}-1/2,\ -i-\floor{\frac{j}{2}}-1,\ -i-\ceil{\frac{j}{2}}-1/2}{-2i-1/2,\ -i,\ -i-1/2}{1},
\end{align}
with
$$
W_{2i,j}=K_{2i,j}\frac{2^{j-1}j!(-i)_{\ceil{\frac{j}{2}}}(-i-\frac{1}{2})_{\ceil{\floor{j}{2}}}}{(i+\frac{1}{2})_{\floor{\frac{j}{2}}+1}(i+1)_{\ceil{\frac{j}{2}}}(j+1)_{j}},
$$
$$
W_{2i+1,j}=K_{2i+1,j}\frac{2^{j-1}j!(-i)_{\floor{\frac{j}{2}}}(-i-\frac{1}{2})_{\ceil{\frac{j}{2}}}}{(i+\frac{3}{2})_{\ceil{\frac{j}{2}}}(i+1)_{\floor{\frac{j}{2}}+1}(j+1)_{j}},
$$
and
$$
K_{i,j}=\frac{(2i-1)!!(2j-1)!!\sqrt{(2i+1)(2j+1)}}{(1)_i(1)_j}.
$$
In all cases the above hypergeometric functions are balanced. 
Also the above functions satisfy the orthogonality relations given by equations~\eqref{o1} and \eqref{or2}.

\end{theorem}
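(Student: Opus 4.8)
The plan is to assemble Theorem~\ref{repres} out of the case-by-case integral evaluations already performed, converting each $I^1_{i,j}$ into the corresponding matrix entry and then repackaging the four parity cases into the two ceiling/floor formulas.

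First I would establish the two ${}_2F_1$ forms in \eqref{rep1}. The first is immediate from \eqref{c1ij}: I expand the prefactor $l_{i,j}$ of \eqref{prefact} by writing $\sqrt{(2i+1)/(2j+1)}=\sqrt{(2i+1)(2j+1)}/(2j+1)$ and absorbing the loose factor $2j+1$ into the factorial through $(2j+1)(2j)!=(2j+1)!$, which reproduces the coefficient on the first line. For the second line I would apply the terminating transformation recorded in \eqref{rep2}, which carries the argument $\tfrac12$ to $2$ and replaces the parameters $(-i+j,\,i+j+1;\,2j+2)$ by $(-i+j,\,-i-j-1;\,-2i)$; substituting the resulting reciprocal of $\hat l_{i,j}$ and simplifying the factorials produces the sign $(-1)^{i-j}$ and the coefficient displayed on the second line.

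Next I would produce the ${}_4F_3$ representations. Here I would collect the four evaluations \eqref{I2i2j}, \eqref{I2i2jm1}, \eqref{I2ip12j}, \eqref{I2ip12jp1}, which treat the (even row, even column), (even row, odd column), (odd row, even column) and (odd row, odd column) cases respectively, and multiply each by the factor $K_{i,j}$ that converts $I^1_{i,j}$ into $(C^n_1)_{i,j}$ exactly as in \eqref{c1ij}. The remaining work is notational: I would check that substituting an even column into \eqref{rep2e}, where $\ceil{j/2}=\floor{j/2}$, recovers the even-column evaluation, while an odd column recovers the odd-column one, with $W_{2i,j}$ collapsing to the two respective case prefactors; the identical verification against \eqref{I2ip12j} and \eqref{I2ip12jp1} settles \eqref{rep2o}.

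Finally I would record the two auxiliary assertions. The balanced property is a one-line computation: in each ${}_4F_3$ the $\pm\ceil{j/2}$ and $\pm\floor{j/2}$ contributions cancel within the numerator sum, and comparing the constant remainders of the numerator and denominator totals---remembering the implicit bottom parameter $1$ carried by the $(1)_i$ in the series---shows the numerator total is one less than the denominator total. The orthogonality relations \eqref{o1} and \eqref{or2} need no new argument, since they are precisely the identities proved in Lemma~\ref{cosym} from \eqref{symmco} and \eqref{mc1ortho}, now read off the explicit entries. I expect the main obstacle to be the parity bookkeeping in this unification step: the (even row, odd column) and (odd row, even column) evaluations carry asymmetric half-integer shifts, so one must be careful which numerator parameter receives $\ceil{j/2}$ rather than $\floor{j/2}$ in each case, and the balanced check is the natural consistency test that this assignment has been made correctly.
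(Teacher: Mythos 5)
Your proposal is correct and follows the paper's own route exactly: the paper proves Theorem~\ref{repres} simply by collecting the computations immediately preceding it --- \eqref{c1ij} with the prefactor \eqref{prefact} for the first line of \eqref{rep1}, the transformation recorded in \eqref{rep2} for the second line, and the four parity evaluations \eqref{I2i2j}--\eqref{I2ip12jp1} multiplied by the conversion factor $K_{i,j}$ for the ${}_4F_3$ forms --- which is precisely your plan, including the balancedness check and the appeal to Lemma~\ref{cosym} for the orthogonality relations. (Your final bookkeeping step would in fact reveal that \eqref{rep2e} as printed does not literally match \eqref{I2i2j}--\eqref{I2i2jm1}, whose denominator parameters are $-2i+1/2,\ -i,\ -i+1/2$; this is an apparent typo in the theorem statement rather than a gap in your argument.)
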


The values of $(C^n_1)_{i,j}$ for $j=i,i-1,$ and $i-2$ with $n>2$ are simple and given by,
\begin{equation}\label{cnm1}
(C^n_1)_{i,i}=\frac{1}{2^i},\quad (C^n_1)_{i,i-1}=\frac{\sqrt{(2i+1)(2i-1)}}{2^i}, 
\end{equation}
and
\begin{equation}\label{cnm2}
(C^n_1)_{i,i-2}=\frac{(i-2)\sqrt{(2i+1)(2i-1)}}{2^i}.
\end{equation}
For $n>1$ we find using Kummer's theorem \cite{ba}(p. 9),
\begin{align*}
(C^n_1)_{i,0}&=\sqrt{2i+1}\frac{\Gamma(3/2)}{\Gamma((2-i)/2)\Gamma((i+3)/2)}\nonumber\\&=\bigg\{\begin{matrix}0,&\qquad i\ \text{even},\ i>0,\\ (-1)^{\frac{i-1}{2}}\frac{\sqrt{2i+1}}{2}\ (\frac{1}{2})_{\frac{i-1}{2}}/((i+1)/2)!,&\qquad i\ \text{odd},\ i>0,\end{matrix}
\end{align*}
where $\Gamma$ is the Gamma function. That $(C_1^n)_{2i,0}=0$ also follows from the symmetry and orthogonality of the Legendre polynomials. 
For the simplest case when $n=0$
i.e. piecewise constant scaling functions we find that
$$
C^0_1=1.
$$
For other $n$ we find,
$$
C^1_1=\left(\begin{matrix} 1&0\\  \frac{\sqrt{3}}{2}&\frac{1}{2}\end{matrix}\right),\ 
C^2_1=\left(\begin{matrix} 1&0&0\\ \frac{\sqrt{3}}{2}&\frac{1}{2}&0\\0&\frac{\sqrt{15}}{4}&\frac{1}{4}\end{matrix}\right),\
\text{and} \
C^3_1=\left(\begin{matrix} 1&0&0&0\\ \frac{\sqrt{3}}{2}&\frac{1}{2}&0&0\\0&\frac{\sqrt{15}}{4}&\frac{1}{4}&0\\-\frac{\sqrt{7}}{8}&\frac{\sqrt{21}}{8}&\frac{\sqrt{35}}{8}&\frac{1}{8} \end{matrix}\right).
$$

\section{Recurrence formulas and Generalized Eigenvalue problem}

The contiguous relations for hypergeometric functions give recurrence
formulas among the entries in the matrix $C^n_1$ which we now study.
A useful and well known relation (\cite{aar} equation~(2.5.15)) that ${}_2 F_1$ hypergeometric functions
satisfy is the following,
\begin{align}\label{cont}
e_1\hypergeom21{a-1,\ b+1}{c}{x}&=e_2\hypergeom21{a,\ b}{c}{x}\nonumber\\&+e_3\hypergeom21{a+1,\ b-1}{c}{x},
\end{align}
where
$$
e_1=2b(c-a)(b-a-1),\ e_3=2a(b-c)(b-a+1),
$$
and
$$
e_2=(b-a)[(1-2x)(b-a-1)(b-a+1)+(b+a-1)(2c-b-a-1)].
$$
With $x=1/2,\ a=-i+j,\ b=i+j+1$,\ $c=2j+2$ and the definition of $l_{i,j}$
we find
\begin{equation*}\label{reone}
 \frac{(i+j+2)(i+1-j)i}{\sqrt{(2i+3)(2i+1)}j(j+1)} (C^n_1)_{i+1,j}=(C^n_1)_{i,j}-\frac{(i+j)(i-j-1)(i+1)}{\sqrt{(2i-1)(2i+1)}j(j+1)} (C^n_1)_{i-1,j},
\end{equation*}
where the top line of equation~\eqref{rep1} has been used. 
Since $(i+j+2)(i-j-1)=(i+1)(i+2)-j(j+1)$ we see that the above equation can
be recast as a generalized eigenvalue equation,
\begin{equation}\label{geneigi}
A_i(C^n_1)_{i,j}=j(j+1) B_i (C^n_1)_{i,j}, 0\le j\le i<n,
\end{equation}
where
\begin{equation}\label{eqia}
A_i=\frac{i(i+1)(i+2)}{\sqrt{(2i+3)(2i+1)}}E_+
+\frac{(i-1)(i)(i+1)}{\sqrt{(2i+1)(2i-1)}}E_- ,
\end{equation}
and
\begin{equation}\label{eqib}
B_i=\frac{i}{\sqrt{(2i+3)(2i+1)}}E_+ + 1+\frac{i+1}{\sqrt{(2i+1)(2i-1)}}E_-.
\end{equation}
Here $E_{\pm}$ are the forward, backward shifts in $i$ respectively. In the
above equation the fact that $(C^n_1)_{i,j}=0$ for $i<j$ has been used.
To obtain a recurrence for fixed $i$ substitute $x=2,\ a=-i+j,\ b=-i-j-1$
and $c=-2i$ which when coupled with the second line in
equation~\eqref{rep1} yields,
\begin{align*}
&\frac{(i+j)(j+1)(i-j+1)}{\sqrt{(2j+1)(2j-1)}}(C^n_1)_{i,j-1}-3(j+1)j(C^n_1)_{i,j}\nonumber\\&+
\frac{(i+j+2)j(i-j-1)}{\sqrt{(2j+1)(2j+3)}}(C^n_1)_{i,j+1}=-i(i+1)(C^n_1)_{i,j}.
\end{align*}
This also can be recast as the generalized eigenvalue equation,
\begin{equation}\label{geneigj}
\hat A_j(C^n_1)_{i,j}=i(i+1) \hat B_j (C^n_1)_{i,j}, 0<j\le i<n
\end{equation}
where
\begin{equation}\label{eqiah}
\hat A_j=\frac{j(j+1)(j+2)}{\sqrt{(2j+3)(2j+1)}}\hat E_+
+3j(j+1)1+\frac{(j-1)(j)(j+1)}{\sqrt{(2j+1)(2j-1)}}\hat E_- ,
\end{equation}
and
\begin{equation}\label{eqibh}
\hat B_j=\frac{j}{\sqrt{(2j+3)(2j+1)}}\hat E_+ +
1+\frac{j+1}{\sqrt{(2j+1)(2j-1)}}\hat E_-.
\end{equation}
Here $\hat E_{\pm}$ are the forward, backward shifts in $j$
respectively. As above we use the condition that $(C^n_1)_{i,j}=0$ for $i<j$.
An interesting formula may be found by eliminating  $p_{j}(2t-1)$  in
\eqref{I1} using equation~\eqref{lrec} which gives, 
$$
I^1_{i,j}=-I^1_{i,j-1}-\frac{(j-1)^2}{(2j-1)(2j-3)}I^1_{i,j-2}+2\int_0^1 tp_i(t)p_{j-1}(2t-1)dt.
$$
Now eliminating $tp_i(t)$ yields,
\begin{equation*}
 I^1_{i,j}=-I^1_{i,j-1}-\frac{(j-1)^2}{(2j-1)(2j-3)}I^1_{i,j-2}+ 2I^1_{i+1,j-1}+\frac{2i^2}{(2i+1)(2i-1)}I^1_{i-1,j-1}.
\end{equation*}
The first line of equation~\eqref{c1ij} yields after increasing
$j$ by one,
\begin{equation}\label{recurij}
\tilde A_j(C_1^n)_{i,j}=\tilde B_i(C_1^n)_{i,j},\quad 0\le j\le i<n,
\end{equation}
where
\begin{equation}\label{eqat}
\tilde A_j=\frac{j}{\sqrt{(2j+1)(2j-1)}}\hat E_- +1 +\frac{j+1}{\sqrt{(2j+3)(2j+1)}}\hat E_+,
\end{equation}
and
\begin{equation}\label{eqbt}
\tilde B_i=\frac{2i}{\sqrt{(2i+1)(2i-1)}}E_- +\frac{2(i+1)}{\sqrt{(2i+3)(2i+1)}}E_+.
\end{equation}

With the above we formulate,
\begin{theorem}\label{recurrence} Let $C^n_1$ and $C^n_{-1}$ be as in
  Theorem~\eqref{repres}. Then they satisfy the generalized eigenvalue
  problems given in equations~\eqref{geneigi} and \eqref{geneigj} and the
  difference equation \eqref{recurij}.

\end{theorem}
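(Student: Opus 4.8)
The plan is to assemble the three asserted relations from the two hypergeometric representations of $(C^n_1)_{i,j}$ in Theorem~\ref{repres}, using the contiguous relation \eqref{cont} for the two eigenvalue problems and the Legendre recurrence \eqref{lrec} for the difference equation. Since Lemma~\ref{cosym} expresses every entry of $C^n_{-1}$ through \eqref{symmco} as $(-1)^{i+j}(C^n_1)_{i,j}$, it suffices to work with $C^n_1$ throughout. The quoted identities do the analytic work; the real effort is in carrying the normalization prefactor $l_{i,j}$ of \eqref{prefact} through each index shift so that the accumulated Pochhammer ratios collapse to the square-root coefficients recorded in the operators.

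For the recurrence in $i$, equation~\eqref{geneigi}, I would substitute the top line of \eqref{rep1} into \eqref{cont} with $x=1/2$, $a=-i+j$, $b=i+j+1$, and $c=2j+2$. Because $c$ does not depend on $i$, the shifted pairs $(a-1,b+1)$ and $(a+1,b-1)$ are exactly the shifts $i\mapsto i+1$ and $i\mapsto i-1$ at fixed $j$, so after multiplying by $l_{i,j}$ the three ${}_2F_1$ values turn into $(C^n_1)_{i+1,j}$, $(C^n_1)_{i,j}$, and $(C^n_1)_{i-1,j}$. Reducing the rational coefficients $e_1,e_2,e_3$ and then using the factorization of $(i+j+2)(i+1-j)$ as $(i+1)(i+2)-j(j+1)$ splits off the factor $j(j+1)$, which is precisely what turns the three-term recurrence into the generalized eigenvalue equation \eqref{geneigi} with $A_i,B_i$ as in \eqref{eqia} and \eqref{eqib}; the convention $(C^n_1)_{i,j}=0$ for $i<j$ furnishes the boundary behavior.

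The recurrence in $j$, equation~\eqref{geneigj}, is entirely parallel but begins from the second line of \eqref{rep1} and uses \eqref{cont} with $x=2$, $a=-i+j$, $b=-i-j-1$, and $c=-2i$. Here $c$ is independent of $j$, so $(a\mp1,b\pm1)$ realize $j\mapsto j\mp1$ at fixed $i$; passing to the normalized entries and performing the analogous rearrangement produces the eigenvalue form \eqref{geneigj} with eigenvalue $i(i+1)$ and operators $\hat A_j,\hat B_j$ from \eqref{eqiah} and \eqref{eqibh}.

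The difference equation \eqref{recurij} follows a different route, from the integral $I^1_{i,j}$ of \eqref{I1}. I would first apply \eqref{lrec} to $p_j(2t-1)$ to lower its degree; writing $p_j(2t-1)=(2t-1)p_{j-1}(2t-1)-\frac{(j-1)^2}{(2j-1)(2j-3)}p_{j-2}(2t-1)$ produces $-I^1_{i,j-1}$, a multiple of $I^1_{i,j-2}$, and a residual $2\int_0^1 t\,p_i(t)p_{j-1}(2t-1)\,dt$. Applying \eqref{lrec} once more, now to $t\,p_i(t)$, rewrites this residual in terms of $I^1_{i+1,j-1}$ and $I^1_{i-1,j-1}$, giving a five-term relation among the $I^1$'s. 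Converting to the orthonormalized entries by the first line of \eqref{c1ij} and then shifting $j\mapsto j+1$ yields \eqref{recurij} with $\tilde A_j,\tilde B_i$ as in \eqref{eqat} and \eqref{eqbt}. I expect the main obstacle to be exactly this bookkeeping: each quoted identity is stated for monic polynomials or bare hypergeometric values, so one must track the ratios $l_{i\pm1,j}/l_{i,j}$ and $l_{i,j\pm1}/l_{i,j}$ together with the monic-to-orthonormal factor \eqref{orthonormal}, and verify that the surviving Pochhammer quotients reduce precisely to the advertised coefficients built from $\sqrt{(2i+3)(2i+1)}$, $\sqrt{(2i+1)(2i-1)}$ and their $j$-analogues, with no residual dependence beyond the eigenvalue $j(j+1)$ or $i(i+1)$.
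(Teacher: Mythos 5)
Your proposal is correct and follows essentially the same route as the paper: the contiguous relation \eqref{cont} with the parameter choices $x=1/2$, $a=-i+j$, $b=i+j+1$, $c=2j+2$ (resp.\ $x=2$, $a=-i+j$, $b=-i-j-1$, $c=-2i$) applied to the two lines of \eqref{rep1} for the eigenvalue equations, and the double application of \eqref{lrec} to $I^1_{i,j}$ for the difference equation \eqref{recurij}. Your factorization $(i+j+2)(i+1-j)=(i+1)(i+2)-j(j+1)$ is the correct one matching the displayed coefficient (the paper's text has a sign slip there), so nothing further is needed.
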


\section{The Fourier Transform}

An important object in wavelet theory is the Fourier transform of the
scaling functions. To exploit the symmetry of the Legendre polynomials we
will use equation~\eqref{refsym} and define
$$
\tilde P_n(a)=\int_{-\infty}^{\infty}
e^{-iat}\Phi_n(\frac{t+1}{2})dt=\int_{-1}^{1} e^{-iat} P_n(t)dt,
$$
so that,
\begin{equation}\label{ft}
\tilde P_n(a)=T_n(a)\tilde P_n(a/2),
\end{equation}
where
\begin{equation}\label{pa}
T_n(a)=(C^n_{-1} e^{ia/2}+C^n_{1} e^{-ia/2})/2.
\end{equation}
Since (see \cite{gr}),
\begin{equation}\label{jn}
\int_{-1}^1 e^{iat}\hat p_n(t)=\sqrt{2n+1}\sqrt{2\pi} i^n J_{n+1/2}(a/2)/\sqrt{a},
\end{equation}
where $J_{\nu}$ is the Bessel function of order $\nu$, we obtain the
addition formula
\begin{align}\label{addit}
&\sqrt{2j+1} i^j \frac{J_{j+1/2}(a)}{\sqrt{a}}\nonumber\\&=\frac{1}{2}\sum^j_{k=0}(C^n_1)_{j,k}((-1)^{j+k} e^{ia/2}+e^{-ia/2}) i^k\sqrt{2k+1} \frac{J_{k+1/2}(a/2)}{\sqrt{\frac{a}{2}}},
\end{align}
where the symmetry properties of entries of  $C^n_{-1}$ have been used.
Thus for $j$ even, 
\begin{align*}
\sqrt{4j+1} (-1)^j
\frac{J_{2j+1/2}(a)}{\sqrt{a}}&=\cos(a/2)\sum^j_{k=0}(-1)^k(C^n_1)_{2j,2k}
\sqrt{4k+1} \frac{J_{2k+1/2}(a/2)}{\sqrt{\frac{a}{2}}}\\&+\sin(a/2)\sum^{j-1}_{k=0}(-1)^k(C^n_1)_{2j,2k+1}
\sqrt{4k+3} \frac{J_{2k+3/2}(a/2)}{\sqrt{\frac{a}{2}}},
\end{align*}
while for $j$ odd,
\begin{align*}
\sqrt{4j+3} (-1)^j
\frac{J_{2j+3/2}(a)}{\sqrt{a}}&=-\sin(a/2)\sum^j_{k=0}(-1)^k(C^n_1)_{2j+1,2k}
\sqrt{4k+1}\frac{J_{2k+1/2}(a/2)}{\sqrt{\frac{a}{2}}}\\&+\cos(a/2)\sum^{j-1}_{k=0}(-1)^k(C^n_1)_{2j+1,2k+1}
\sqrt{4k+3} \frac{J_{2k+3/2}(a/2)}{\sqrt{\frac{a}{2}}},
\end{align*}
Recurrence formulas may also be obtained using the fact that Bessel functions
satisfy a differential difference equation. Multiply equation~\eqref{ft} by $\sqrt{a}$
for  $a>0$ and set 
\begin{equation}\label{hft}
\hat P_n(a)=\left[\begin{matrix} J_{1/2}(a)&\cdots&
      i^n\sqrt{2n+1}J_{n+1/2}(a)\end{matrix}\right]^T=G_n\J_n(a),
\end{equation}
where
\begin{equation}\label{gn}
 G_n=\text{diagonal}(i,\ldots,i^n\sqrt{2n+1}),
\end{equation}
and
\begin{equation}\label{cjn}
\J_n(a)=\left[\begin{matrix} J_{1/2}(a)&\cdots& J_{n+1/2}(a)\end{matrix}\right]^T.
\end{equation}
With the above substitions equation~\eqref{ft} becomes,
\begin{equation}\label{hfta}
\hat P_n(a)=\sqrt{2}T_n(a)\hat P_n(a/2).
\end{equation}
Differentiation of $\hat P_n$ and the use of the differential difference
relation $2J_{n+1/2}(a)'=J_{n-1/2}(a)-J_{n+3/2}(a)$ yields, 
\begin{align*}
2\hat
P_n(a)'&=2G_n\J_n(a)'=G_nL_n\J_n(a)+G_n[J_{-1/2}(a),0\ldots,0,-J_{n+3/2}(a)]^T\\&=G_nL_nG_n^{-1}\sqrt{2}T_n(a)\hat
P_n(a/2)+G_n[J_{-1/2}(a),0\ldots,0,-J_{n+3/2}(a)]^T,
\end{align*}
where $L_n$ is an $(n+1)\times(n+1)$ tridiagonal matrix which is  $-1$ on the
upper diagonal $0$ on the diagonal and $1$ on the lower diagonal and
equations~\eqref{hfta} and \eqref{hft} have been used to obtain the last
equality. Differentiation of the right hand side of equation~\eqref{hfta} then using similar manipulations as above yields
\begin{align}\label{dfta}
T_n(a)'\hat P_n(a/2)=&\left(H_nT_n(a)-\frac{1}{2}T_n(a)H_n\right)\hat P_n(a/2)\\\nonumber&+
\frac{1}{\sqrt{2}}G_n[J_{-1/2}(a),0,\ldots,0,-J_{n+3/2}(a)]^T\\\nonumber& -\frac{1}{2}P_n(a)G_n[J_{-1/2}(a/2),0\ldots,0,-J_{n+3/2}(a/2)]^T,
\end{align}
where $H_n=G_nL_nG_n^{-1}$. Examination of the above equation for $a$ small
and positive shows that for fixed $j$ the sequence $((-1)^{j+k} e^{-ia/2}+e^{ia/2})J_k(a/2),k=0,\ldots,n$ is linearly independent. Thus the above equation yields the difference equation,
\begin{align}\label{pdde}
K_i(C^n_1)_{i,j}=J_j(C^n_1)_{i,j}, \ 0<i<j<n,
\end{align}
where
$$
K_i=\sqrt{\frac{2i+1}{2i-1}}E_-+ \sqrt{\frac{2i+1}{2i+3}}E_+ ,
$$
and
$$
J_j=\frac{1}{2}\sqrt{\frac{2j-1}{2j+1}}\hat E_-+ \frac{1}{2}\sqrt{\frac{2j+3}{2j+1}}\hat E_+ + 1.
$$

\section{Wavelets}
We now develop equations to compute a set of orthogonal wavelets associated with the above scaling functions. We are interested in finding wavelet functions that form a basis for $L^2(R)$
and are obtained by integer translates and dilations by 2 of a fixed set
of functions. From equation~\eqref{nest} with the change of variable that
lead to \eqref{refsym} then for approximation order $n$ it is enough to find $(n+1)\times(n+1)$ matrices $D_{-1}$ and $D_1$, and functions  
$$\Psi_n=\left(\begin{matrix}\psi_0^n &\cdots&\psi^n_n\end{matrix}\right)^T
$$ 
given by
\begin{align}\label{wrefine}
 \Psi_n(\frac{t+1}{2})&=D^n_{-1}\Phi_n(t+1)+D^n_1\Phi_n(t)\nonumber\\&=D^n_{-1}P_n(2t+1)+D^n_1P_n(2t-1),
\end{align}
where the last equality holds for $-1\le t\le 1$. The imposed orthogonality implies,
\begin{equation}\label{cdortho}
 C^n_{-1} {D^n_{-1}}^T + C^n_1 {D^n_{1}}^T=0,
\end{equation}
and
\begin{equation}\label{ddortho}
 D^n_{-1} {D^n_{-1}}^T + D^n_1{D^n_{1}}^T=2I_{n+1}.
\end{equation}
From \eqref{wrefine} we find
$$
D^n_1=\int_0^1\Psi_n(t) P_n(2t-1) dt,
$$
and
$$
D^n_{-1}=\int_{-1}^0\Psi_n(t)P_n(2t+1) dt.
$$
 For general $n$ there are an infinite number of solutions to the above equations even if we ask that the wavelet functions in $\Psi_n$ 
be symmetric or antisymmetric. If we solve equations~\eqref{cdortho} and \eqref{ddortho} with $n=0$ we find that
 $(D^0_{-1})_{0,0}=-(D^0_{1})_{0,0}=(C^0_1)_{0,0}$ so that the first wavelet function is the Haar wavelet which is 
antisymmetric. Thus to obtain symmetry set $(D^n_{-1})_{i,j}=(-1)^{i+j+1}(D^n_{1})_{i,j},\ 0\le i, \ j\le n$. For $n=1$ we find
$$
D^1_1=\left(\begin{matrix} (D^1_1)_{0,0} &  (D^1_1)_{0,1}\\ (D^1_1)_{1,0} &  (D^1_1)_{1,1}\end{matrix}\right)
$$
and
$$
D_{-1}^1=\left(\begin{matrix} (-D^1_1)_{0,0} &  (D^1_1)_{0,1}\\ (D^1_1)_{1,0} &  (-D^1_1)_{1,1}\end{matrix}\right)
$$
If we insist that $D^1_1$ has positive diagonal entries there is a unique solution to equations~\eqref{cdortho} and \eqref{ddortho} given by
$$
D^1_1=\left(\begin{matrix} (C^1_1)_{1,1} &  -(C^1_1)_{1,0}\\ 0 &  1\end{matrix}\right).
$$
This suggests that a unique solution can be found for which $D^n_1$ is upper triangular with positive diagonal entries.
\begin{theorem}\label{thmwave}
 Let $C_1$ be a lower triangular matrix with positive diagonal entries  satisfying $C_{-1} C_{-1}^T+C_1 C_1^T=2I$ where $C_{-1}$ be obtained from $C_1$ by the
symmetry relation $(C_{-1})_{i,j}=(-1)^{i+j}(C_1)_{i,j}$. Then for $n\ge1$
there is a unique upper triangular $(n+1)\times(n+1)$ matrix $D_1$ with positive diagonal entries that satisfies equations~\eqref{cdortho} and \eqref{ddortho} where $D_{-1}$ has the symmetry relations
$(D_{-1})_{i,j}=(-1)^{i+j+1}(D_1)_{i,j}$ and $(D_{1})_{n,n}=1$
\end{theorem}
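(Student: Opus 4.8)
The plan is to use the sign symmetry to turn the two matrix identities into scalar conditions, observe that these decouple into two independent orthonormal-completion problems, and then solve each by a downward Gram--Schmidt construction whose only nontrivial ingredient is a dimension count forced by the triangular shape of $C_1$.

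First I would record the symmetry algebraically. Let $E=\operatorname{diag}((-1)^0,\dots,(-1)^n)$, so the hypothesis on $C_{-1}$ reads $C_{-1}=EC_1E$ and the unknown is constrained by $D_{-1}=-ED_1E$. Using $E^2=I$ one computes $C_{-1}C_{-1}^T=EC_1C_1^TE$, $C_{-1}D_{-1}^T=-EC_1D_1^TE$ and $D_{-1}D_{-1}^T=ED_1D_1^TE$. Reading off entries, the hypothesis $C_{-1}C_{-1}^T+C_1C_1^T=2I$ becomes $((-1)^{i+j}+1)(C_1C_1^T)_{i,j}=2\delta_{i,j}$, so the rows $c_0,\dots,c_n$ of $C_1$ are unit vectors with $\langle c_i,c_k\rangle=0$ whenever $i\neq k$ have equal parity (this recovers \eqref{o1} and \eqref{or2}). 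In the same way \eqref{cdortho} is equivalent to
\[(C_1D_1^T)_{i,j}=0\qquad(i+j\ \text{odd}),\]
and \eqref{ddortho} is equivalent to $(D_1D_1^T)_{i,i}=1$ for all $i$ together with $(D_1D_1^T)_{i,j}=0$ for $i\neq j$ with $i+j$ even. Hence, writing $d_0,\dots,d_n$ for the rows of $D_1$, the system demands exactly that every $d_i$ be a unit vector, that $\langle c_i,d_k\rangle=0$ for $i,k$ of opposite parity, and that $\langle d_i,d_k\rangle=0$ for $i\neq k$ of equal parity; the opposite-parity products $\langle d_i,d_k\rangle$ stay free, which is consistent because the symmetric definition of $D_{-1}$ forces the matching entries of $D_{-1}D_{-1}^T+D_1D_1^T$ to vanish by themselves.

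I would then build $D_1$ row by row from $i=n$ down to $i=0$. Triangularity places $d_i$ in $F_i:=\spam\{e_i,\dots,e_n\}$, where $e_0,\dots,e_n$ is the standard basis of $\R^{n+1}$, while lower triangularity of $C_1$ places $c_k$ in $\spam\{e_0,\dots,e_k\}$ with $(c_k)_k\neq0$. Fix one parity, let $S$ be the set of indices of that parity, and set $U=(\spam\{c_k:k\in S\})^\perp$; since the $c_k$ with $k\in S$ are orthonormal, $\dim U=\#S^{c}$, and the rows of $D_1$ indexed by $S^{c}$ (the opposite parity) must form an orthonormal basis of $U$. The crux is the count
\[\dim(U\cap F_k)=\#\{\,i\in S^{c}:i\ge k\,\}\qquad(0\le k\le n+1).\]
I would prove it by projecting: if $P_{F_k}$ denotes orthogonal projection onto $F_k$, then $P_{F_k}c_i$ is supported on $\{k,\dots,i\}$ with nonzero top entry indexed by $i$ (and vanishes for $i<k$), so the nonzero projections are in echelon form and number $\#\{i\in S:i\ge k\}$; subtracting this from $\dim F_k=n+1-k$ gives the stated value.

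Given the count, the downward recursion is forced and the main claim follows. Assume the rows $d_k$ with $k>i$ and $k\in S^{c}$ have been constructed; by the count they are an orthonormal basis of $U\cap F_{i+1}$. A candidate $d_i$ must lie in $U\cap F_i$ — orthogonality to the $c_k$ with $k<i$ is automatic, their supports being disjoint from $F_i$ — and be orthogonal to $U\cap F_{i+1}$; as $\dim(U\cap F_i)=\dim(U\cap F_{i+1})+1$, this confines $d_i$ to a single line. That line is not contained in $F_{i+1}$, so its vectors have nonzero $i$-th coordinate, and the requirements $\|d_i\|=1$ and $(d_i)_i>0$ then determine $d_i$ uniquely. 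Carrying this out for both parities yields the unique $D_1$; for the top row $F_n=\spam\{e_n\}\subseteq U$ forces $d_n=e_n$, i.e. $(D_1)_{n,n}=1$, matching the normalization. The one real obstacle is the dimension count; once it is in place, existence, uniqueness, triangularity, positivity and the normalization all fall out of the induction.
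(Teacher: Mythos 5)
Your proof is correct. The paper argues by induction on the matrix size: it deletes the first row and column of $C_1$, invokes the induction hypothesis to obtain the trailing block $\hat D_1$ (padded with a zero column), and then determines the one remaining row ${\bf d_0}$ as the essentially unique unit vector annihilated by the $n$ constraint vectors coming from \eqref{cdortho} and \eqref{ddortho}, their linear independence following from the triangularity of $\hat C_1$ and the orthogonality relations already known for $\hat D_1$. Your argument uses the same underlying mechanism --- each row of $D_1$ is confined to a line by linear constraints whose independence comes from the echelon structure of $C_1$, and is then fixed by normalization and the sign of its diagonal entry --- but organizes it differently: you decouple the two matrix identities by parity via conjugation by $E=\operatorname{diag}((-1)^i)$, recast each parity class as the problem of completing the orthonormal set $\{c_k:k\in S\}$ to a basis adapted to the flag $F_n\subset\cdots\subset F_0$, and run a single downward recursion through the rows of the full $(n+1)\times(n+1)$ matrix, with the dimension count $\dim(U\cap F_k)=\#\{i\in S^{c}:i\ge k\}$ (proved correctly via the echelon form of the projections $P_{F_k}c_i$) as the only substantive lemma. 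This buys a cleaner global picture --- the rows of $D_1$ of a fixed parity are exactly an orthonormal basis of $U$ compatible with the standard flag, which makes the derived normalization $(D_1)_{n,n}=1$ and the vanishing patterns exploited later in Lemma~\ref{even} transparent --- and it treats the top row and the base case uniformly rather than asserting the case $n=1$ separately. The paper's size-induction formulation, for its part, is the version that is reused verbatim in the proof of Lemma~\ref{even}. Both proofs are complete; yours is a legitimate, somewhat more self-contained alternative.
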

\begin{proof}
 We note that the result is true for $n=1$ so we suppose it is true by induction for $n-1$. Consider the $n\times n$ matrices $\hat C_1$ obtained from $C_1$ by deleting the first row and column. Then from the induction hypothesis there is a unique upper triangular $\hat D_1$ associated with $\hat C_1$ which satisfies equations~\eqref{cdortho} and \eqref{ddortho} and  $(\hat D_{1})_{n-1,n-1}=1$. Let  ${\bf c_0}$ be the first column of $C_1$, $\hat{\bf c_i}, i=1,\ldots, n$ be the rows of $\hat C_1$, and write
$$
C_1=\left(\begin{matrix} {\bf c_0} & 0\\ &\hat C_1\end{matrix}\right).
$$
Likewise let ${\bf d_0}$ be the first row of $D_1$  and  
$\hat{\bf d_i}, i=1,\ldots, n$ be the  rows of $\hat D_1$.
Using the symmetry equations we see that \eqref{cdortho} and \eqref{ddortho} 
yield the equations,
\begin{equation}\label{cceq}
(({\bf c_0})_i, \hat{\bf c_{i}}){\bf d_0}^T=0,\ i=1,3,\ldots,
\end{equation}
\begin{equation}\label{ddeq}
(0, \hat {\bf d_i}){\bf d_0}^T=0,\ i=2,4,\ldots.
\end{equation}
and
\begin{equation}\label{normal}
{\bf d_0}{\bf d_0}^T=1.
\end{equation}
Since $\hat C_1$ is lower triangular with positive diagonal elements the vectors $({\bf c_0}_i, \hat{\bf c_{i}})$ in equation~\eqref{cceq} are independent. The equations  $\hat C_{-1}(\hat D_{-1})^T+\hat C_1(\hat D_1)^T=0$, and 
$\hat D_{-1}(\hat D_{-1})^T+\hat D_1(\hat D_1)^T=0$ show that the vectors
$(0, \hat {\bf d_i})$ in \eqref{ddeq} are orthogonal to each other and to
the vectors in \eqref{cceq}. Thus the rank of the matrix whose rows are the 
equations~\eqref{cceq} and \eqref{ddeq} is $n$. With $i=1$ in \eqref{cceq} we find $(C_1)_{0,1} (D_1)_{0,0}+ (C_1)_{1,1} (D_1)_{0,1}=0$ which implies that $(D_1)_{0,0}$ is the free variable which is fixed uniquely by the choosing the positive solution to
 equation~\eqref{normal} .
\end{proof}
We now show,
\begin{lemma}\label{even}
Given $C^n_i,\ i=\{-1,1\}$ suppose $D^n_1$ and $D^1_1$ satisfy the hypothesis of the
Theorem~\eqref{thmwave}. Then $(D^n_1)_{n-2j,n}=0$ and $(D^n_1)_{n-2j,n-k}=(D^{n-1}_1)_{n-2j,n-k}$.
\end{lemma}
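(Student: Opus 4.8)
The plan is to unwind the matrix identities \eqref{cdortho} and \eqref{ddortho} into scalar systems with the help of the symmetry relations \eqref{symmco} and $(D^n_{-1})_{i,j}=(-1)^{i+j+1}(D^n_1)_{i,j}$. Substituting these into \eqref{cdortho} and \eqref{ddortho} and reading off the $(i,k)$ entry, the common factor $(-1)^{i+k+1}+1$ (respectively $(-1)^{i+k}+1$) shows that \eqref{cdortho} is equivalent to
\begin{equation}\label{Lodd}
\sum_m (C^n_1)_{i,m}(D^n_1)_{k,m}=0\qquad (i+k\ \text{odd}),
\end{equation}
while \eqref{ddortho} is equivalent to
\begin{equation}\label{Leven}
\sum_m (D^n_1)_{i,m}(D^n_1)_{k,m}=\delta_{i,k}\qquad (i+k\ \text{even}),
\end{equation}
the remaining parities being automatic. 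I would also record the nesting of the refinement coefficients: the right-hand side of \eqref{c1ij} (equivalently \eqref{rep1}) does not depend on $n$, so $(C^n_1)_{i,j}=(C^{n-1}_1)_{i,j}$ for $0\le i,j\le n-1$; that is, $C^{n-1}_1$ is the leading $n\times n$ principal submatrix of $C^n_1$.

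First I would settle the last column. As $D^n_1$ is upper triangular with $(D^n_1)_{n,n}=1$, its bottom row is $(0,\dots,0,1)$, so taking $i=n-2j$ and $k=n$ in \eqref{Leven} (indices of equal parity) gives $(D^n_1)_{n-2j,n}=\delta_{n-2j,n}=0$ for every $j\ge1$, which is the first assertion.

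The heart of the matter is that \eqref{Lodd}--\eqref{Leven} decouple according to the parity of the row index of $D^n_1$: \eqref{Leven} pairs $D^n_1$ rows only of equal parity, and \eqref{Lodd} pairs a $D^n_1$ row only with rows of $C^n_1$, so the even- and odd-indexed rows of $D^n_1$ obey two independent systems. Consequently the rows of $D^n_1$ whose index has the same parity as $n$ are determined, on their own, by being orthonormal, orthogonal to every row of $C^n_1$ of parity opposite to $n$, and upper triangular with positive diagonal. The key point is that each row of $C^n_1$ of parity opposite to $n$ has index $\le n-1$, hence by lower triangularity is supported in columns $0,\dots,n-1$ and, by the nesting above, equals the corresponding row of $C^{n-1}_1$; the only genuinely new row $n$ of $C^n_1$ carries parity $n$ and therefore never enters this system. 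Using the first assertion to delete the vanishing $n$-th column, the same-parity rows of $D^n_1$ below row $n$ thus satisfy exactly the system obeyed by the same-parity rows of $D^{n-1}_1$.

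It remains to invoke uniqueness. The parity decoupling splits any matrix obeying the hypotheses of Theorem~\ref{thmwave} into an even block and an odd block, exactly one of which contains the normalizing top row; for $D^{n-1}_1$ this row $n-1$ sits in the block of parity opposite to $n$. Pairing an arbitrary solution of the parity-$n$ block with the normalization-carrying block of $D^{n-1}_1$ produces a matrix meeting all the hypotheses of Theorem~\ref{thmwave}, whose uniqueness forces the parity-$n$ block to be unique; since the (column-deleted) parity-$n$ rows of $D^n_1$ and the parity-$n$ rows of $D^{n-1}_1$ both solve that block's system, they coincide, giving $(D^n_1)_{n-2j,n-k}=(D^{n-1}_1)_{n-2j,n-k}$. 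The main obstacle is exactly this uniqueness step: one must track, for $n$ even and $n$ odd alike, which parity block carries the normalization and verify that the complementary block is pinned down without it. A self-contained alternative is a staircase Gram--Schmidt from the bottom-right corner, where \eqref{cnm1} guarantees that the relevant subdiagonal entries of $C^n_1$ are nonzero, so each successive row of the block is cut to a one-dimensional set and then fixed by positivity of the diagonal; the surrounding algebra is routine once \eqref{Lodd}--\eqref{Leven} and the nesting of $C^n_1$ are in hand.
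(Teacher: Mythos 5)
Your proof is correct and follows essentially the same route as the paper's: both derive $(D^n_1)_{n-2j,n}=0$ from the normalized bottom row together with the symmetry relation, both observe that after the parity decoupling of \eqref{cdortho} and \eqref{ddortho} the same-parity rows of $D^n_1$ satisfy the identical (nested) system as the corresponding rows of $D^{n-1}_1$, and both conclude by the uniqueness in Theorem~\ref{thmwave}. Your hybrid-matrix packaging of the final uniqueness step is a slightly more explicit rendering of the paper's appeal to that theorem, not a different method.
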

\begin{proof} We begin with the observation that from the symmetry
  relations we find $(D^n_{-1})_{n-2j,n}= -(D^n_1)_{n-2j,n}$. Thus last row
  of equation~\eqref{ddortho} shows that
  $(D^n_{-1})_{n-2j,n}=0$. The proof of Theorem~\eqref{thmwave} also shows
  that in order to compute $(D^n_1)_{n-2k,i} , i=n-2k,..,n-1$, we can choose
  $\hat D_1$ and $\hat D_{-1}$ so that they start with  the row $n-2j+1$
  (starting from zero) of $D^n_1$ and $D^n_{-1}$. Examination of the equations~\eqref{cceq} and \eqref{ddeq} yield,
$$
(C^n_1)_{n-i,n-i-1} (D^n_1)_{n-i-1,n-i-1}+(C^n_1)_{n-i,n-i}(D^n_1)_{n-i-1,n-i}=0,
$$
and
$$
(D^n_1)^2_{n-i-1,n-i-1}+(D^n_1)^2_{n-i-1,n-i}=1,
$$
for $i=0,1$. The unique solutions of these equations from
Theorem~\eqref{thmwave} and the entries of $(C^n_1)$ in
equation~\eqref{cnm1} above are given respectively by
equations~\eqref{dnm1} and \eqref{dnm2} below and shows explicitly that the
result for $(D^n_1)_{n-2,n-k}, k=1,2$. Using that
$(C^n_1)_{n-j,n-k}=(C^{n-1}_1)_{n-j+1,n-k+1}$ for $j>0,k>0$ in equations~\eqref{cceq} and the induction hypothesis in equations~\eqref{ddeq} imply that the entries $(D^n_1)_{n-2j,n-k}$ solve the same equations as  $(D^{n-1}_1)_{n-2j,n-k}$ for $k=1,\ldots, 2j$. The uniqueness of the solutions given by Theorem~\eqref{thmwave} above proves the Lemma.
\end{proof}

Using Theorem~\eqref{thmwave} allows us to compute some of the matrix
elements in $D^n_1$. To this end we find 
for row $n+1$,
$$(D_1^n)=1,$$
for  row $n$,
\begin{equation}\label{dnm1}
(D^n_1)_{n-1,n-1}=\frac{1}{2n},\  (D^n_1)_{n-1,n}=-\frac{\sqrt{(2n+1)(2n-1)}}{2n},
\end{equation}
for row $n-1$ 
\begin{equation}\label{dnm2}
(D^n_1)_{n-2,n-2}=\frac{1}{2n-2},\ (D^n_1)_{n-2,n-1}=-\frac{\sqrt{(2n-1)(2n-3)}}{2n-2},\ (D^n_1)_{n-2,n}=0,
\end{equation}
for row $n-2$,
\begin{align*}
&(D^n_1)_{n-3,n-3}=\frac{3}{4(n-1)(n-2)},\ (D^n_1)_{n-3,n-2}=-\frac{3\sqrt{(2n-3)(2n-5)}}{4(n-1)(n-2)},\\& 
(D^n_1)_{n-3,n-1}=\frac{(2n+1)\sqrt{(2n-1)(2n-5)}}{4(n)(n-1)},\ (D^n_1)_{n-3,n}=\frac{\sqrt{(2n+1)(2n-5)}}{4(n)(n-1)}.
\end{align*}
\section{Acknowledgements}
JSG would like to thank Eric Koelink for suggesting the transformation
leading to equation~\eqref{rep2} and Plamen Iliev for discussions on the
generalized eigenvalue problem. JSG would like to thank the University
Carlos III de
Madrid  and especially the Mathematics Department where he was a C\'atedra de Excelencia for its support and hospitality.


\end{document}